\newcolumntype{L}[1]{>{\raggedright\let\newline\\\arraybackslash\hspace{0pt}}m{#1}}
\newcolumntype{C}[1]{>{\centering\let\newline\\\arraybackslash\hspace{0pt}}m{#1}}
\newcolumntype{R}[1]{>{\raggedleft\let\newline\\\arraybackslash\hspace{0pt}}m{#1}}
\def\ps@pprintTitle{%
\let\@oddhead\@empty
\let\@evenhead\@empty
\def\@oddfoot{\centerline{\thepage}}%
\let\@evenfoot\@oddfoot}
\newtheorem{theorem}{Theorem}[section]
\newtheorem{lemma}[theorem]{Lemma}
\newtheorem{proposition}[theorem]{Proposition}
\newtheorem{cor}[theorem]{Corollary}
\newtheorem{definition}[theorem]{Definition}
\begin{document}

\begin{frontmatter}
\title{Minimizing geodesic nets and critical points of distance}
\author{Ian M Adelstein} 
\address{Department of Mathematics, Yale University \\ New Haven, CT 06520 United States}
\begin{abstract} We establish a relationship between geodesic nets and critical points of the distance function. We bound the number of balanced points for certain minimizing geodesic nets on manifolds homeomorphic to the $n$-sphere. This result is used to give conditions under which a minimizing geodesic flower degenerates into a simple closed geodesic. 
\end{abstract}
\begin{keyword} closed geodesics; geodesic nets; critical points of distance 
\MSC[2010]  53C22 \sep 53C20
\end{keyword}
\end{frontmatter}

\section{Introduction}

A net is a (finite) multigraph embedded into a Riemannian manifold. The vertices of a net are partitioned into \emph{balanced} and  \emph{boundary} vertices. A number of adjectives are often added to the definition of a net, c.f.~Nabutovsky and Rotman \cite{NR2007}. A \emph{geodesic} net is a net such that each edge is a geodesic, and a \emph{stationary} geodesic net is a geodesic net such that at each balanced vertex, the sum of the unit vectors tangent to the incident edges equals zero. As such, stationary geodesic nets are critical points of the length/energy functional on the space of graphs with prescribed boundary vertex set embedded in a Riemannian manifold. In this paper we work with a new, more restrictive class of nets:

\begin{definition} A \emph{minimizing geodesic net} is a stationary geodesic net such that each edge realizes the distance between its vertices. 
\end{definition}

We provide some examples. In flat Euclidean space, every stationary geodesic net is a minimizing geodesic net, as geodesics in Euclidean space minimize between any pair of their points. There are open questions about stationary geodesic nets even in the Euclidean setting, c.f.~Nabutovsky and Parsch \cite{NP2019}.

Hass and Morgan \cite{morgan} provide one of the only known existence results on geodesic nets, demonstrating that convex metrics on the 2-sphere sufficiently close to the round metric necessarily contain a stationary geodesic net modeled on the theta-graph. The theta-graph is a graph with three edges between two vertices. Such a theta-graph can be realized as a minimizing geodesic net on the round 2-sphere by taking the poles as the vertices, and three great semicircles of longitude as the edges, each meeting at angle $2\pi/3$ at the poles (see Figure~\ref{theta}).

\begin{figure}[h]
  \includegraphics[scale=.4]{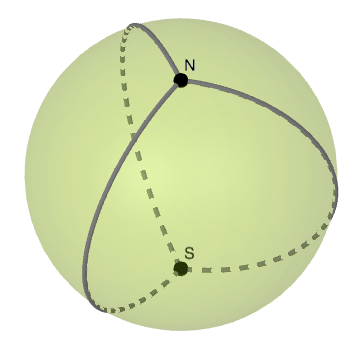}
  \centering
  \caption{A theta-graph on the sphere.}
  \label{theta}
\end{figure}

An important question in the study of geodesic nets is whether one can bound the number of balanced (stationary) vertices in terms of the other geometric properties of the net. Parsch \cite{fabian} has recently proved that a geodesic net with three unbalanced (boundary) vertices in the plane endowed with a Riemannian metric of non-positive curvature can admit at most one balanced vertex. Gromov \cite{Gro09} has conjectured that the number of balanced vertices of a geodesic net in the Euclidean plane can be bounded above in terms of the number of unbalanced vertices and the total imbalance, see also \cite[Conjecture 3.4.1]{NP2019}.

Our main result provides a bound on the number of balanced vertices on minimizing geodesic nets modeled on certain graphs embedded in Riemannian manifolds endowed with metrics of positive sectional curvature. Note that by the Grove-Shiohama diameter sphere theorem \cite{Gro} all manifolds considered in the following theorem are homeomorphic to the $n$-sphere. 

\begin{theorem}\label{main} Let $M$ be a complete Riemannian manifold with sectional curvature $k \geq 1$ and injectivity radius $> \pi/2$. Let $\Gamma$ be a star multigraph, i.e.~a multigraph with radius 1 and no cycles of length greater than 2 (see Figure~\ref{graph}). Then any minimizing geodesic net modeled on $\Gamma$ whose image contains a simple closed geodesic admits  at most two balanced vertices.
\end{theorem}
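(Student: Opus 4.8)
The plan is to reduce the count of balanced vertices to a statement about critical points of the distance function $d(c,\cdot)$ from the center $c$ of the star, and then to prove a uniqueness statement for far-away critical points via Toponogov comparison. First I would fix notation using the graph structure. Since $\Gamma$ is a star multigraph it has a center vertex $c$ and a collection of leaves, with edges running only between $c$ and the leaves (an edge between two leaves would create a cycle of length $3$). In the net, a simple closed geodesic in the image can only arise from a length-$2$ cycle, hence from a pair of edges joining $c$ to a single leaf $\ell_0$; smoothness of the closed geodesic at both $c$ and $\ell_0$ means these two minimizing arcs leave each endpoint in opposite directions, and each has length $r_0 = d(c,\ell_0)$. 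Every balanced vertex is therefore either $c$ itself or one of the leaves, so it suffices to show there is at most one balanced leaf.

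The key correspondence---the relationship between nets and critical points advertised in the abstract---is that a balanced leaf is a critical point of $d(c,\cdot)$. Indeed, at a leaf every incident edge is a minimizing geodesic to $c$, and the balancing condition says the unit tangents $u_1,\dots,u_m$ at the leaf pointing toward $c$ satisfy $\sum_i u_i = 0$. If these vectors lay in an open half-space $\{\langle\cdot,n\rangle>0\}$ their sum would pair positively with $n$, a contradiction; hence they lie in no open half-space, which is exactly the criticality condition for $d(c,\cdot)$. Moreover balancing forces $m \ge 2$ distinct minimizing geodesics from the leaf to $c$, so by $\mathrm{inj}(M) > \pi/2$ the leaf cannot lie within the injectivity radius, where the minimizing geodesic to $c$ is unique, and thus sits at distance $>\pi/2$ from $c$. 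The same reasoning applied to the closed geodesic shows $\ell_0$ is itself such a far critical point.

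The heart of the argument, and the step I expect to be the main obstacle, is to show that $d(c,\cdot)$ has at most one critical point at distance $>\pi/2$. Suppose $x_1,x_2$ were two such points with $r_i = d(c,x_i) > \pi/2$ and $\delta = d(x_1,x_2)$. Criticality of $x_1$ gives a minimizing geodesic $x_1c$ meeting a minimizing geodesic $x_1x_2$ at an angle $\le \pi/2$, so the hinge version of Toponogov's theorem for $k \ge 1$, comparing against the unit sphere, yields $\cos r_2 \ge \cos r_1 \cos\delta$; symmetrically $\cos r_1 \ge \cos r_2 \cos\delta$. Writing $a=\cos r_1$ and $b=\cos r_2$, both negative since $r_i>\pi/2$, and $t=\cos\delta$, these read $b \ge at$ and $a \ge bt$. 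If $t\le 0$ then $at\ge 0>b$, contradicting $b\ge at$; if $t>0$ then $a \ge bt \ge at^2$ forces $t^2\ge 1$, whence $\delta=0$ and $x_1=x_2$. Either way the two points coincide, so the far critical point is unique. Verifying the hypotheses of the hinge comparison---minimizing sides of length $\le \mathrm{diam}(M)\le\pi$, and the $\le\pi/2$ angle supplied by criticality---is the only delicate point.

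Combining the pieces finishes the proof. There is at most one balanced leaf, and by the previous step it must be the far critical point $\ell_0$ already produced by the simple closed geodesic. Together with the possibility that the center $c$ is balanced, the set of balanced vertices is contained in $\{c,\ell_0\}$, giving the bound of two. The round theta-graph, with $c$ and $\ell_0$ antipodal poles, shows the bound is sharp.
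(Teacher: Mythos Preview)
Your argument is correct, and it takes a genuinely different route from the paper's. The paper first isolates a standalone proposition: if $p,q$ are \emph{mutually} Grove--Shiohama critical (each critical for the other's distance function) then $d_p$ admits no further critical points. Its proof builds a geodesic triangle $p,q,x$, uses criticality at all three vertices to bound all three interior angles by $\pi/2$, applies the triangle form of Toponogov to get a single comparison triangle on $S^2$ contained in an octant, and derives the contradiction $d(p,q)\le\pi/2<\mathrm{inj}(M)\le d(p,q)$. You instead prove that $d(c,\cdot)$ has at most one critical point at distance $>\pi/2$, with no hypothesis on $c$ at all: two hinge comparisons at $x_1$ and $x_2$ give the pair of cosine inequalities, and a short algebraic argument forces $x_1=x_2$.

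What each approach buys: your lemma is more self-contained for this theorem, since you never need to check that the center $c$ is critical for anything---only that balanced leaves are critical for $d_c$ and lie beyond $\pi/2$. The paper's proposition, on the other hand, is formulated as a reusable uniqueness statement about mutual critical pairs, which it also invokes for the corollary on geodesic flowers and relates to earlier work on minimizing closed geodesics. Both rely on the same inputs (Toponogov with $k\ge 1$, Bonnet--Myers for the $\le\pi$ side-length bound, and $\mathrm{inj}>\pi/2$ to force critical points past $\pi/2$), so the difference is really one of packaging: three angles and an octant versus two hinges and cosine algebra.
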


\begin{figure}[h]
  \includegraphics[scale=.3]{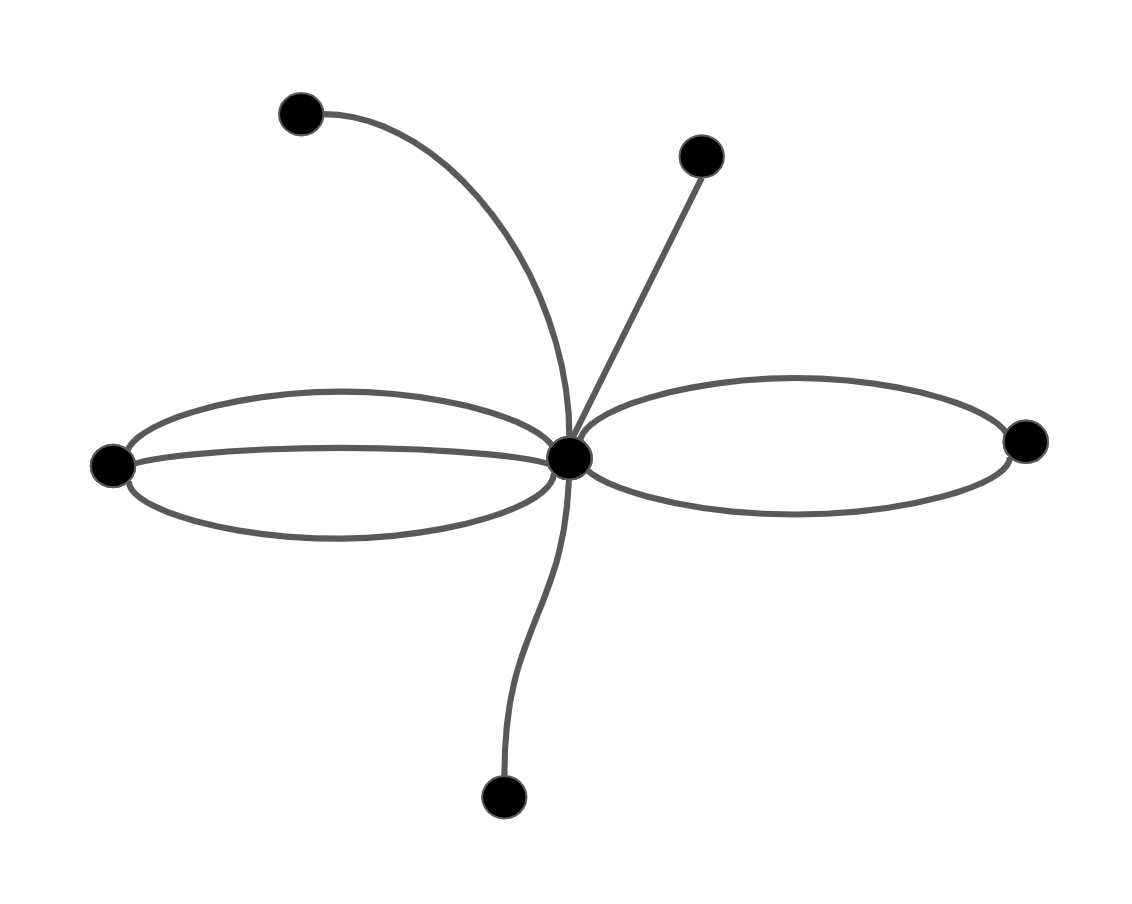}
  \centering
  \caption{A multigraph with radius 1 and no cycles of length greater than 2.}
  \label{graph}
\end{figure}

The stationarity condition at a balanced vertex of degree 2 implies that the two incident edges form a smooth geodesic. One can therefore always add or delete balanced vertices of degree 2 from a geodesic net without changing its nature. In the theorem above, the radius 1 condition implies that we can not add balanced vertices of degree 2, while the minimizing condition implies that we can not delete such vertices. 

The proof of the theorem relies on the notion of a Grove-Shiohama critical point of distance [Definition~\ref{GSdef2}]. For a minimizing geodesic net modeled on $\Gamma$, the central vertex (central is well defined, unless there are exactly two vertices, in which case either can be considered central) together with the halfway point along the assumed closed geodesic form a pair of mutual Grove-Shiohama critical points of distance. Under the assumed lower bounds on sectional curvature and injectivity radius, we show that mutual Grove-Shiohama critical points of distance are unique [Proposition~\ref{prop}]. It follows that the net cannot admit balanced vertices beyond this pair; full details are provided in Section~\ref{proof}. 

This theorem has a nice reformulation in the setting of minimizing geodesic flowers. A \emph{geodesic flower} is a bouquet of (finitely many) geodesic loops (petals) based at the same vertex and satisfying the stationary condition at the vertex. We define a \emph{minimizing} geodesic flower to be a geodesic flower that realizes the distance from the vertex to the halfway point along each of its petals. If a minimizing geodesic flower contains a simple closed geodesic, then the vertex together with the halfway point form a mutual Grove-Shiohama critical pair, and uniqueness [Proposition~\ref{prop}] yields the following:

\begin{cor}\label{flower}
Let $M$ be a complete Riemannian manifold with sectional curvature $k \geq 1$ and injectivity radius $> \pi/2$. Then any minimizing geodesic flower on $M$ whose image contains a simple closed geodesic must have exactly one petal, namely the simple closed geodesic. 
\end{cor}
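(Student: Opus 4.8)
The plan is to deduce the corollary from Theorem~\ref{main} by exhibiting a minimizing geodesic flower as a minimizing geodesic net modeled on a star multigraph. Given a minimizing geodesic flower with base vertex $p$ and petals $\sigma_1, \dots, \sigma_k$, I would introduce the halfway point $m_i$ of each petal $\sigma_i$ as a new (degree-two) vertex, so that $\sigma_i$ is split into two edges joining $p$ to $m_i$. The resulting multigraph $\Gamma$ has the central vertex $p$ adjacent to every $m_i$ by a pair of edges, its radius is $1$, and its only cycles are the length-two cycles $\{p, m_i\}$; thus $\Gamma$ is a star multigraph of the type in Theorem~\ref{main}. Since subdivision does not alter the image, the image of this net equals the image of the flower and therefore still contains the assumed simple closed geodesic.

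Next I would check that the subdivided object is a genuine minimizing geodesic net and that all of its vertices are balanced. Each edge is a half-petal, and the defining property of a \emph{minimizing} geodesic flower is exactly that this half-petal realizes $d(p, m_i)$; hence every edge realizes the distance between its endpoints. For stationarity, the base vertex $p$ is balanced because the flower is stationary there by definition. The crux is the halfway vertices: each petal $\sigma_i$ is a geodesic loop, hence a smooth geodesic on its interior, so it passes through $m_i$ smoothly and the two incident edges leave $m_i$ in opposite directions. Their unit tangents therefore sum to zero, and each $m_i$ is balanced. Consequently every one of the $1 + k$ vertices is balanced, and Theorem~\ref{main} forces $1 + k \le 2$, i.e. $k \le 1$. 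Since the flower contains a simple closed geodesic it has at least one petal, so $k = 1$.

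It remains to identify the unique petal $\sigma_1$ with the simple closed geodesic. With a single petal the stationarity condition at $p$ reads $u^+ + u^- = 0$ for the two unit tangents of $\sigma_1$ at $p$, so $\sigma_1$ is smooth at $p$ as well as at $m_1$, hence a smooth closed geodesic; as it constitutes the entire image of the flower, it must be the simple closed geodesic. Equivalently --- and this is the route suggested before the statement --- one can argue directly: the simple closed geodesic is smooth at both $p$ and its halfway point, so at each of these points it supplies two minimizing segments leaving in antipodal directions, which shows $p$ and the halfway point form a mutual Grove-Shiohama critical pair [Definition~\ref{GSdef2}]; the uniqueness of such pairs [Proposition~\ref{prop}] then leaves no room for additional petals. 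I expect the main obstacle to be neither the counting nor the reduction but this last identification step, namely ruling out that a self-intersecting single petal could properly contain a distinct simple closed geodesic. The minimizing hypothesis together with the injectivity radius bound $> \pi/2$ should force the petal to be embedded, and this embeddedness --- rather than the balanced-vertex bookkeeping, which follows transparently from the Remark after Theorem~\ref{main} --- is where I would spend the most care.
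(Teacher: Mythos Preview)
Your argument is correct and follows essentially the same route as the paper's proof: model the flower on a star multigraph by inserting the halfway points as degree-two balanced vertices, then invoke Theorem~\ref{main} to bound the number of balanced vertices by two. Your write-up is in fact more careful than the paper's three-sentence sketch; the embeddedness concern you flag at the end is a genuine subtlety, but the paper does not address it either.
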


Rotman \cite{rotman} (also \cite{NR2004} and \cite{NR2005}) has shown the existence of a constant $c(n)$ such that each closed Riemannian manifold $M^n$ admits a geodesic flower with length bounded above by $c(n)diam(M)$. An important open question due to Gromov \cite{Gro83} is whether there exists such a constant bounding the length of the shortest closed geodesic. In the non-simply connected case it is easy to show that the shortest closed geodesic has length bounded above by twice the diameter. The above corollary helps delineate between Rotman's bound and Gromov's question by giving conditions under which a geodesic flower degenerates into a simple closed geodesic on certain simply-connected manifolds. 

In the compact setting, any minimizing geodesic flower with $m$ petals must have length bounded above by $(2m)diam(M)$. In the non-compact setting such a universal upper bound does not exist, but we provide the following pointwise:

\begin{theorem}\label{bound}
Let $M$ be a complete non-compact Riemannian manifold with sectional curvature $k \geq 0$. Then for every $p \in M$ there exists a constant $R_p>0$ such that every minimizing geodesic flower with vertex $p$ and $m$ petals has length bounded above by $(2m)R_p$.  
\end{theorem}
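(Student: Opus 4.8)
The plan is to convert the length bound into a statement about where the critical points of the distance function $d_p = d(p,\cdot)$ can sit, and then to bound those using the large-scale geometry forced by $k \ge 0$ together with noncompactness. A petal of a minimizing geodesic flower is a geodesic loop based at $p$; if $q_i$ denotes its halfway point and $\ell_i$ its length, then the minimizing hypothesis says exactly that each half of the petal realizes $d(p,q_i)$, so $\ell_i = 2\,d(p,q_i)$. Writing $r_i := d(p,q_i)$, an $m$-petal flower has length $2\sum_{i=1}^m r_i \le 2m\max_i r_i$, so it suffices to produce a single constant $R_p$, depending only on $p$ (and $M$), with $r_i \le R_p$ for every petal of every minimizing flower based at $p$.

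The next step is to recognize each halfway point as a Grove--Shiohama critical point of $d_p$. Since the petal is a smooth geodesic through its interior point $q_i$, its two halves are minimizing geodesics from $q_i$ to $p$ whose initial directions are exactly opposite; hence $0$ lies in the convex hull of the set of directions at $q_i$ of minimizing geodesics to $p$, which is precisely the criticality condition [Definition~\ref{GSdef2}]. Therefore each $r_i$ is the distance from $p$ to some critical point of $d_p$, and Theorem~\ref{bound} reduces to the assertion that on a complete noncompact manifold with $k \ge 0$ the critical points of $d_p$ form a bounded set.

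I would prove this last assertion by contradiction, assuming critical points $q_i$ with $r_i \to \infty$. Choosing minimizing geodesics $\sigma_i$ from $p$ to $q_i$ with convergent initial directions and passing to a subsequence, I extract a limiting ray $\gamma$ from $p$, so that $\epsilon_i := \angle_p(\sigma_i,\gamma) \to 0$. The engine is the Busemann function $b_\gamma$, which is convex because $k \ge 0$ (Cheeger--Gromoll). A Toponogov hinge comparison at $p$ yields $b_\gamma(q_i) \ge r_i\cos\epsilon_i$ --- so $q_i$ sits almost as far out along $\gamma$ as possible --- and the same comparison forces the outward direction $\sigma_i'(r_i)$ to lie within $\epsilon_i$ of $\nabla b_\gamma(q_i)$. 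On the other hand, any minimizing geodesic $\tau_i$ from $q_i$ back to $p$ drops $b_\gamma$ by at least $r_i\cos\epsilon_i$ over length $r_i$; convexity of $b_\gamma$ along $\tau_i$ then pins $\tau_i'(0)$ to within $\epsilon_i$ of the steepest-descent direction $-\nabla b_\gamma(q_i)$. Combining, every minimizing geodesic from $q_i$ to $p$ makes angle at least $\pi - 2\epsilon_i$ with $\sigma_i'(r_i)$; for $i$ large this exceeds $\pi/2$, so all directions to $p$ lie in an open half-space and $q_i$ cannot be critical, a contradiction.

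The main obstacle is exactly this final step: upgrading ``$q_i$ is far from $p$'' to ``all minimizing geodesics from $q_i$ to $p$ point into a narrow cone,'' which is what defeats the balancing condition, and the convexity of the Busemann function is what makes noncompactness and $k \ge 0$ cooperate to produce it. Once the critical set is known to lie in some $\bar B(p,R_p)$, every $r_i \le R_p$ and the bound $(2m)R_p$ follows at once. I would close by recording that $R_p$ genuinely depends on $M$ and $p$ --- on a flat cylinder the minimizing loop has length equal to the circumference --- consistent with the absence of a universal bound in the noncompact setting.
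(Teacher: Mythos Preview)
Your reduction is exactly the paper's: each petal has length $2\,d(p,q_i)$, the halfway points $q_i$ are Grove--Shiohama critical for $d_p$, and hence it suffices to show the critical set of $d_p$ is bounded. At that point the paper simply quotes Gromov's lemma (Lemma~\ref{Gromov}, cited from Cheeger's survey) as a black box and stops; you instead sketch a proof of that lemma, so you are doing strictly more than the paper.

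Your Busemann-function route to the lemma is sound in outline but differs from the argument Cheeger records. The standard proof is more elementary and avoids Busemann functions entirely: if $q$ is critical and $x$ is arbitrary, criticality plus a Toponogov hinge at $q$ yield $d(p,x)^2 \le d(p,q)^2 + d(q,x)^2$; feeding this into the angle version of Toponogov at $p$ shows that any two critical points with $d(p,q_j)\ge 2\,d(p,q_i)$ subtend angle at least $\pi/3$ at $p$, and a packing bound in the unit sphere of $T_pM$ finishes. Your approach is more conceptual but carries a technical cost you gloss over: $b_\gamma$ need not be differentiable at $q_i$, so invoking $\nabla b_\gamma(q_i)$ is not literally justified. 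This is repairable---the convexity bounds you actually use are the one-sided directional estimates $D_{\sigma_i'(r_i)}b_\gamma(q_i)\ge\cos\epsilon_i$ and $D_{\tau_i'(0)}b_\gamma(q_i)\le-\cos\epsilon_i$, and any single subgradient $w\in\partial b_\gamma(q_i)$ (with $|w|\le 1$) then forces $|\sigma_i'(r_i)-\tau_i'(0)|\ge 2\cos\epsilon_i$, hence the angle bound $\pi-2\epsilon_i$---but you should say so rather than write $\nabla b_\gamma$.
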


Note that unlike in \cite{rotman}, our result does not provide an upper bound on the length of the shortest geodesic flower. We do not prove the existence of a minimizing geodesic flower; we merely provide a pointwise upper bound on the length should such a minimizing geodesic flower exist.

The paper proceeds as follows. In Section~\ref{proof} we introduce the relevant ideas from critical point theory. We prove the fact that mutual Grove-Shiohama critical points of distance are unique in our setting [Proposition~\ref{prop}]. We show how Theorem~\ref{main} and Corollary~\ref{flower} follow from this proposition. Finally, we show how Theorem~\ref{bound} follows from a lemma due to Gromov. For more on geodesic nets in this setting see Croke \cite{Croke}, Heppes \cite{Heppes}, Rotman \cite{Rotman07}, and \cite{Ade3}.

\section{Proof of the Theorems}\label{proof}

We first introduce the relevant ideas from critical point theory.

\begin{definition}\label{GSdef2} A \emph{Grove-Shiohama critical point} of $d_p \colon M \to \mathbb{R}$ is a point $q \in M$ such that for any $v \in T_qM$ there exists a minimizing geodesic from $q$ to $p$ with initial velocity vector $w \in T_qM$ such that $\measuredangle {(v, w)}  \leq \pi/2$.
\end{definition}

The original application of this critical point definition is the celebrated Grove-Shiohama diameter sphere theorem \cite{Gro}.

\begin{theorem}[Grove-Shiohama]\label{GSthm2} Let $M$ be a complete Riemannian manifold with sectional curvature $k \geq 1$ and diameter $> \pi/2$. Then $M$ is homeomorphic to the sphere. 
\end{theorem}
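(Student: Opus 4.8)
The plan is to recover $M$ as a topological sphere through exactly the critical point theory for the distance function $d_p$ that the paper has just introduced. First I would observe that since $k \geq 1 > 0$, the Bonnet--Myers theorem already forces $M$ to be compact with $\mathrm{diam}(M) \leq \pi$; combined with the hypothesis $\mathrm{diam}(M) > \pi/2$, this lets me choose a pair of points $p, q \in M$ realizing the diameter, so that $D := d(p,q) = \mathrm{diam}(M) \in (\pi/2, \pi]$. The entire strategy then reduces to a single claim: apart from the base point $p$ itself, the function $d_p$ has exactly one Grove--Shiohama critical point [Definition~\ref{GSdef2}], namely $q$. Granting this, a standard isotopy argument reconstructs $M$ as a twisted sphere.

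The technical heart is the critical-point elimination, carried out with Toponogov's comparison theorem. Suppose $x \neq p$ is a critical point of $d_p$, and set $a = d(x,p)$, $b = d(x,q)$. Choosing $v \in T_xM$ to be the initial velocity of a minimizing geodesic from $x$ to $q$, the criticality of $x$ supplies a minimizing geodesic from $x$ to $p$ whose initial velocity $u$ satisfies $\angle(v,u) \leq \pi/2$. Applying Toponogov's theorem to this hinge at $x$ and comparing with the unit sphere via the spherical law of cosines yields
\[ \cos D \;\geq\; \cos a \cos b + \sin a \sin b \cos\angle(v,u) \;\geq\; \cos a \cos b, \]
where the second inequality uses $\cos\angle(v,u) \geq 0$. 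Because $D$ is the diameter, both $a \leq D$ and $b \leq D$, whence $\cos a \geq \cos D$ and $\cos b \geq \cos D$; since $\cos D < 0$, a short case analysis shows the displayed inequality can hold only when the comparison triangle degenerates, which pins $x$ to $\{p,q\}$, and $x \neq p$ forces $x = q$. I expect this to be the main obstacle: the inequality must be pushed through carefully, and it is precisely here that the curvature lower bound and the strict inequality $\mathrm{diam}(M) > \pi/2$ are both indispensable.

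With the critical points of $d_p$ pinned down to $\{p, q\}$, I would finish by the topological reconstruction that is the hallmark of the Grove--Shiohama method. On $M \setminus \{p,q\}$ the function $d_p$ has no critical points, so the Grove--Shiohama isotopy lemma, built from a gradient-like vector field, shows that $d_p^{-1}([\varepsilon, D-\varepsilon])$ is homeomorphic to the product $d_p^{-1}(\varepsilon) \times [\varepsilon, D-\varepsilon]$, with every regular level set homeomorphic to $S^{n-1}$. The sublevel set $\{d_p \leq \varepsilon\}$ is a metric ball around $p$, hence an $n$-disk, while a neighborhood of the isolated critical maximum $q$ is homeomorphic to the cone over the level sphere, hence also an $n$-disk. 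Thus $M$ is a union of two $n$-disks glued along their boundary $(n-1)$-spheres, a twisted sphere, and therefore homeomorphic to $S^n$. The delicate point in this last stage is that $q$ is \emph{not} a smooth critical point, so the local disk structure near $q$ cannot come from the Morse lemma; establishing the cone structure at such a non-smooth critical maximum is the genuinely new ingredient supplied by the critical-point theory of distance functions.
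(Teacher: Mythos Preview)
The paper does not prove this theorem; it is merely stated with attribution to Grove and Shiohama \cite{Gro} and then invoked to observe that the manifolds in Theorem~\ref{main} and Corollary~\ref{flower} are topological spheres. There is thus no proof in the paper to compare your proposal against.

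For what it is worth, your sketch is the standard Grove--Shiohama argument and is correct. The Toponogov hinge comparison at a putative critical point $x$ does yield $\cos D \geq \cos a \cos b$, and the case analysis on the signs of $\cos a$, $\cos b$ (using $\cos D < 0$ and $a,b \leq D$) forces $x \in \{p,q\}$; the reconstruction of $M$ as two $n$-disks glued along their boundary spheres via the gradient-like flow for $d_p$ is exactly how the original proof concludes. The one place to be a bit more careful is the disk structure near $q$: rather than invoking a cone structure, it is cleaner to note that $q$ is the \emph{unique} maximum of $d_p$ (any maximum is critical), so a small superlevel set lies inside a geodesic ball about $q$ of radius less than the injectivity radius, and the flow then pushes the level set $d_p^{-1}(D-\varepsilon)$ onto the boundary of that ball.
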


On Riemannian manifolds we know diameter $\geq$ injectivity radius, so that by the above result, all manifolds considered by Theorem~\ref{main} and Corollary~\ref{flower} are homeomorphic to the sphere. The theorem of Bonnet-Myers states that a complete Riemannian manifold with sectional curvature $k\geq 1$ has an upper bound of $\pi$ on the diameter, with Cheng \cite{cheng} proving that only the round sphere realizes this diameter upper bound. 

At the other extreme of the manifolds considered by Theorem~\ref{main} and Corollary~\ref{flower}, we note that the standard metric on $\mathbb{R}P^n$ has sectional curvature $k = 1$ and diameter = injectivity radius = $ \pi/2$. This projective space demonstrates that the bounds in Theorem~\ref{main} and Corollary~\ref{flower} are sharp, as $\mathbb{R}P^n$ admits minimizing geodesic flowers with any number of petals (any number of projected great circles through a central vertex). 

Theorem~\ref{main} and Corollary~\ref{flower} rely on the following proposition. Here we say that a pair $p,q \in M$ are \emph{mutually Grove-Shiohama critical} if $q$ is a Grove-Shiohama critical point of $d_p$ and $p$ is a Grove-Shiohama critical point of $d_q$. 

\begin{proposition}\label{prop}
Let $M$ be a complete Riemannian manifold with sectional curvature $k \geq 1$ and injectivity radius $> \pi/2$. If a pair $p,q \in M$ are mutually Grove-Shiohama critical, then neither $d_p$ nor $d_q$ admits additional Grove-Shiohama critical points of distance. 
\end{proposition}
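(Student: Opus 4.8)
The plan is to reduce the statement to the claim that, under the stated curvature and injectivity-radius bounds, the distance function $d_p$ admits at most one Grove-Shiohama critical point [Definition~\ref{GSdef2}]; the symmetric claim for $d_q$ follows by an identical argument, and the mutual criticality hypothesis then guarantees that the unique critical points are exactly $q$ and $p$, respectively. Note that the ``at most one'' part will only use criticality for $d_p$ of the two competing points; mutual criticality is what pins down the identity of the surviving critical point.

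First I would record the preliminary observation that every Grove-Shiohama critical point of $d_p$ lies at distance strictly greater than $\pi/2$ from $p$. Indeed, if $q$ were a critical point with $d(p,q) \le \pi/2$, then since this distance is strictly less than the injectivity radius the minimizing geodesic from $q$ to $p$ would be unique, say with initial direction $w \in T_qM$. Applying the criticality condition to the vector $v = -w$ would then produce a minimizing geodesic making angle $\le \pi/2$ with $-w$, forcing $\measuredangle(w,-w) = \pi \le \pi/2$, a contradiction. This is precisely where the strict bound injectivity radius $> \pi/2$ enters.

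Next, suppose toward a contradiction that $d_p$ has two distinct critical points $q \ne q'$, and set $r = d(p,q)$, $s = d(p,q')$, $t = d(q,q')$. By the preliminary observation $r,s > \pi/2$, so $\cos r < 0$ and $\cos s < 0$; by Bonnet-Myers all three distances are $\le \pi$, so the relevant minimizing geodesics have length $\le \pi$ and Toponogov's hinge comparison against the unit sphere is available. Applying criticality of $q$ to the initial direction of a minimizing geodesic from $q$ to $q'$ yields a minimizing geodesic from $q$ to $p$ meeting it at some angle $\theta \le \pi/2$; feeding this hinge (sides $r$ and $t$, included angle $\theta$) into Toponogov gives $\cos s \ge \cos r \cos t + \sin r \sin t \cos\theta \ge \cos r \cos t$. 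The symmetric application of criticality of $q'$ to the direction of a minimizing geodesic from $q'$ to $q$ gives, with some $\theta' \le \pi/2$, the inequality $\cos r \ge \cos s \cos t + \sin s \sin t \cos\theta' \ge \cos s \cos t$.

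Finally I would combine these two inequalities. Since $\cos r, \cos s < 0$, the first reads $|\cos s| \le |\cos r|\cos t$ and the second $|\cos r| \le |\cos s|\cos t$; in particular $\cos t > 0$. Multiplying and cancelling the positive quantity $|\cos r|\,|\cos s|$ forces $1 \le \cos^2 t$, which is impossible for $t = d(q,q') \in (0,\pi)$. This contradiction shows $d_p$ has at most one critical point; as $q$ is one by hypothesis, it is the only one, and the same argument with $p$ in place of $q$ handles $d_q$. The step I expect to require the most care is the Toponogov set-up: one must verify that both sides of each hinge are genuinely \emph{minimizing} geodesics of length $\le \pi$, so that the hinge comparison is legitimate, and keep the direction of the inequality straight. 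The conceptual crux is the interplay between the sign of the cosines (negative, because criticality pushes both points past distance $\pi/2$) and the angle bound $\le \pi/2$ supplied by the critical-point condition.
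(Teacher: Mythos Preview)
Your argument is correct and in fact establishes a slightly stronger fact than the paper's: you show that under the hypotheses $d_p$ has at most one Grove--Shiohama critical point, using only criticality of the two competing points $q,q'$ for $d_p$ and never invoking criticality at $p$ itself. The paper proceeds differently. It works with the triangle on vertices $p$, $q$, and the putative extra critical point $x$, and uses criticality at all three vertices (criticality of $x$ for $d_p$, of $p$ for $d_q$, and of $q$ for $d_p$; the latter two are exactly the mutual-criticality hypothesis) to force all three angles of this triangle to be at most $\pi/2$. Toponogov's triangle comparison then places the comparison triangle inside an octant of the unit sphere, so every side has length $\le \pi/2$, contradicting $d(p,q)>\pi/2$. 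By contrast, you place hinges only at $q$ and $q'$, apply the hinge form of Toponogov together with the spherical law of cosines, and extract a contradiction algebraically from $\cos r,\cos s<0$. What your route buys is that the mutual-criticality hypothesis is used only at the very end to name the surviving critical point, so the core uniqueness statement is cleaner; what the paper's route buys is a more geometric picture (the octant argument) that avoids the cosine manipulations.
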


\begin{proof}
Let $p$ and $q$ be mutually Grove-Shiohama critical and assume by contradiction that $x \neq p,q \in M$ is a Grove-Shiohama critical point of $d_p$. Let $\gamma_2$ be a minimal geodesic from $q$ to $x$. We know there exists a minimal geodesic $\gamma_0$ from $x$ to $p$ such that $\measuredangle{(-\dot{\gamma}_2(d(q,x)),\dot{\gamma}_0(0))} \leq \pi/2$. Since $p$ and $q$ are mutually critical there exist minimal geodesics $\gamma_1$ and $\tilde{\gamma}_1$ from $p$ to $q$ such that $\measuredangle{(\dot{\gamma}_1(0),\dot{\gamma}_0(d(p,x)))} \leq \pi/2$ and $\measuredangle{(-\dot{\tilde{\gamma}}_1(d(p,q)),\dot{\gamma}_2(0))} \leq \pi/2$. Now apply the triangle version of Toponogov's theorem to both $\{\gamma_0, \gamma_1, \gamma_2 \}$ and $\{\gamma_0, \tilde{\gamma}_1, \gamma_2 \}$ yielding comparison triangles in the unit 2-sphere. Because triangles in the sphere are determined up to congruence by side lengths we get a unique comparison triangle, each of whose angles is $\leq \pi/2$. This implies that the comparison triangle is completely contained in an octant of the sphere, hence has side lengths $\leq \pi/2$. This is a contradiction, as $q$ being a Grove-Shiohama critical point of $d_p \colon M \to \mathbb{R}$ implies that $d(p,q) \geq injrad(M) > \pi/2$. 
\end{proof}

We now provide short proofs of Theorem~\ref{main} and Corollary~\ref{flower}.

\begin{proof}[Proof of Theorem~\ref{main}]
Assume that $M$ admits a minimizing geodesic net modeled on such a graph $\Gamma$. Note that the assumed simple closed geodesic must contain the central vertex $p$ for the net. The minimizing condition on the geodesic net implies that the two halves of the simple closed geodesic from $p$ to the halfway point $q$ (also a vertex of the net) both realize distance. Thus this pair is mutually Grove-Shiohama critical. 

By the no cycles of length greater than 2 condition on $\Gamma$, any other vertex $x$ is adjacent only to $p$. If this vertex $x$ were balanced (stationary) then it would be a Grove-Shiohama critical point of $d_p \colon M \to \mathbb{R}$. Thus by Proposition~\ref{prop} the only possible balanced vertices are $p$ and $q$; note that neither of these vertices need be balanced. 
\end{proof}

\begin{proof}[Proof of Corollary~\ref{flower}]
First note that we can model any geodesic flower on a graph $\Gamma$ from Theorem~\ref{main}. In such a model the central vertex together with each of the halfway points along the petals will be balanced vertices. Moreover, these halfway points are all Grove-Shiohama critical points of the central vertex. The result then follows from Theorem~\ref{main}.
\end{proof}

Theorem~\ref{bound} follows from this lemma due to Gromov \cite[Corollary 2.9]{Che}.

\begin{lemma}[Gromov]\label{Gromov} Let $M$ be a complete non-compact Riemannian manifold with sectional curvature $k \geq 0$. Then for every $p \in M$ the distance function $d_p \colon M \to \mathbb{R}$ has no critical points outside of some ball $B(p,R_p)$. In particular, $M$ is homeomorphic to the interior of a compact manifold with boundary. 
\end{lemma}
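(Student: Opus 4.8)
The plan is to establish the substantive first assertion (that the critical set of $d_p$ is bounded) by contradiction, and then obtain the topological conclusion from the standard isotopy lemma of critical point theory. So suppose $d_p$ had Grove--Shiohama critical points $q_i$ with $\ell_i := d(p,q_i) \to \infty$. The two geometric inputs will be global Toponogov comparison (valid since $M$ is complete with $\sec \ge 0$, comparing now with the Euclidean plane rather than the sphere) and the existence of rays, which I would extract from non-compactness: choosing $x_j \to \infty$ with minimizing geodesics from $p$ to $x_j$, a subsequence of initial directions converges and the resulting geodesic $\gamma$ is a ray from $p$. For such a ray the Busemann function $b_\gamma(x) = \lim_{t\to\infty}\bigl(t - d(x,\gamma(t))\bigr)$ exists by monotonicity in $t$.

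I would first record two Busemann estimates. \emph{Upper estimate, from criticality:} fix a ray $\gamma$ from $p$ and a critical point $q$. For large $t$ form the geodesic triangle $p,q,\gamma(t)$, taking a minimizing geodesic $\mu$ from $q$ to $\gamma(t)$; applying criticality of $q$ to the direction $v=\dot\mu(0)$ produces a minimizing geodesic $\tau$ from $q$ to $p$ with $\measuredangle(\dot\mu(0),\dot\tau(0)) \le \pi/2$, so the triangle's angle at $q$ is $\le \pi/2$. Toponogov (triangle version) forces the Euclidean comparison angle at $q$ to be $\le \pi/2$ as well, and the planar law of cosines gives $t^2 \le \ell^2 + d(q,\gamma(t))^2$, i.e.\ $\bigl(t-d(q,\gamma(t))\bigr)\bigl(t+d(q,\gamma(t))\bigr) \le \ell^2$ with $\ell=d(p,q)$; letting $t\to\infty$ forces $b_\gamma(q) \le 0$. \emph{Lower estimate, unconditional:} for any $x$ and any minimizing geodesic $\sigma$ from $p$ to $x$, the hinge version of Toponogov gives $d(\gamma(t),x) \le \sqrt{t^2 + \ell^2 - 2t\ell\cos\alpha}$ with $\alpha=\measuredangle_p(\dot\gamma(0),\dot\sigma(0))$ and $\ell=d(p,x)$; sending $t\to\infty$ yields $b_\gamma(x) \ge d(p,x)\cos\alpha$.

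Combining the two estimates at a critical point $q$ gives the key lemma: for every ray $\gamma$ from $p$ and every minimizing geodesic $\sigma$ from $p$ to $q$, one has $d(p,q)\cos\measuredangle_p(\dot\gamma(0),\dot\sigma(0)) \le b_\gamma(q) \le 0$, whence $\measuredangle_p(\dot\gamma(0),\dot\sigma(0)) \ge \pi/2$. In words, the initial direction of any minimizing geodesic to a critical point lies at angle $\ge \pi/2$ from the direction of every ray out of $p$. I would then close the contradiction: choosing minimizing geodesics $\sigma_i$ from $p$ to $q_i$ and passing to a subsequence, $\dot\sigma_i(0) \to u$, and because $\ell_i \to \infty$ the geodesic $\gamma_0(t)=\exp_p(tu)$ is itself a ray from $p$. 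The key lemma applied with this ray and $\sigma_i$ gives $\measuredangle(u,\dot\sigma_i(0)) \ge \pi/2$ for every $i$, which is absurd since $\dot\sigma_i(0)\to u$. This yields the required $R_p$.

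I expect the main obstacle to be bookkeeping the comparison inequalities with the correct orientation --- that nonnegative curvature makes hinge endpoints \emph{closer} (giving the lower bound on $b_\gamma$) while making comparison angles \emph{smaller} (giving $b_\gamma(q)\le 0$) --- together with verifying that the limiting geodesic $\gamma_0$ is a genuine ray whose direction is exactly $\lim \dot\sigma_i(0)$. The final clause, that $M$ is homeomorphic to the interior of a compact manifold with boundary, then follows from the standard isotopy lemma: on the critical-point-free region $\{d_p \ge R_p\}$ one builds a gradient-like vector field whose flow exhibits $M \setminus B(p,R_p)$ as a product $\partial B(p,R') \times [R',\infty)$ for a regular value $R' > R_p$, so that $M$ is homeomorphic to the interior of $\overline{B(p,R')}$.
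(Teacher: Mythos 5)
Your proof is correct, but there is nothing in the paper to compare it against: the paper states this lemma as a known result of Gromov, citing Cheeger's lecture notes \cite[Corollary 2.9]{Che}, and uses it as a black box in deriving Theorem~\ref{bound}. What you have written is a self-contained proof of the cited result, and it holds up. The two Toponogov orientations are handled correctly: the lower curvature bound makes the Euclidean comparison angle at the critical point $q$ at most the actual angle, hence $\le \pi/2$, giving $t^2 \le d(p,q)^2 + d(q,\gamma(t))^2$ and so $b_\gamma(q) \le 0$, while the hinge version makes distances at most their Euclidean counterparts, giving $b_\gamma(x) \ge d(p,x)\cos\alpha$; and your limit geodesic $\gamma_0$ is indeed a ray, since each segment $\gamma_0|_{[0,T]}$ is a limit of the minimizing segments $\sigma_i|_{[0,T]}$ once $\ell_i \ge T$, so $d(p,\gamma_0(T)) = T$ for every $T$. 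For the record, your route differs mildly from the argument in the cited source: Cheeger/Gromov avoid Busemann functions, instead using the same criticality-plus-Toponogov estimate to show that the angle at $p$ between minimizing geodesics to a critical point $q$ and to any point much farther away is bounded below by a universal constant, so that a sequence of critical points $q_i$ with $d(p,q_{i+1}) \ge 2\,d(p,q_i)$ would produce infinitely many directions in the unit sphere of $T_pM$ pairwise separated by a definite angle, contradicting compactness. Your Busemann version buys the sharper asymptotic statement (directions to distant critical points become orthogonal to every ray from $p$) at the cost of the extra limiting bookkeeping; both are standard and correct. The closing topological clause via the isotopy lemma on $\{d_p \ge R'\}$ is likewise the standard treatment; the one point worth flagging is that verifying the level set $d_p^{-1}(R')$ is a topological manifold, so that $\overline{B(p,R')}$ is genuinely a compact manifold with boundary, is part of the isotopy-lemma machinery rather than immediate from the product structure alone.
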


\begin{proof}[Proof of Theorem~\ref{bound}]
Consider a minimizing geodesic flower with $m$ petals and vertex $p$. The halfway point along each petal is a Grove-Shiohama critical point of $d_p \colon M \to \mathbb{R}$, and by Lemma~\ref{Gromov} must lie in $B(p,R_p)$. We conclude that the total length of the minimizing geodesic flower is bounded above by $(2m)R_p$.
\end{proof}

See also \cite{ade2} where minimizing closed geodesics are studied via Grove-Shiohama critical points of distance, a version of Proposition~\ref{prop}, and Lemma~\ref{Gromov}.

%\section*{References}

\end{document}